\newtheorem{remark}{\bf Remark}
\newtheorem{lemma}{\bf Lemma}
\newtheorem{proposition}{\bf Proposition}
\newcolumntype{"}{@{\hskip\tabcolsep\vrule width 2pt\hskip\tabcolsep}}
\newcounter{protocol}
\newenvironment{protocol}[1][]%
  {
     \needspace{2\baselineskip}
    \noindent \rule{\linewidth}{1pt} \endgraf
    \refstepcounter{protocol}
    \centering \textsc{Protocol}~\theprotocol%
    \ifthenelse{\isempty{#1}}{}{:\ #1}
  }{
  \vspace{-2pt}
  \noindent \rule{\linewidth}{1pt}\vspace{2pt}
  }
\newcommand{\mr}{\mathrm}
\newcommand{\mb}{\mathbf}
\newcommand{\mbb}{\mathbb}
\newcommand{\mbR}{\mathbb R}
\newcommand\numberthis{\addtocounter{equation}{1}\tag{\theequation}}
\title{\LARGE \bf
Encrypted Distributed Lasso for Sparse Data Predictive Control
}
\author{Andreea B. Alexandru \and Anastasios Tsiamis \and George J. Pappas%
\thanks{The authors are with the Department of Electrical and Systems Engineering, University of Pennsylvania, Philadelphia, PA 19104.
        {\tt\small \{aandreea,atsiamis,pappasg\}@seas.upenn.edu}}%
}
\begin{document}

\maketitle
\thispagestyle{plain}
\pagestyle{plain}

\begin{abstract}
The least squares problem with $\ell_1$-regularized regressors, called \emph{Lasso}, is a widely used approach in optimization problems where sparsity of the regressors is desired. This formulation is fundamental for many applications in signal processing, machine learning and control. As a motivating problem, we investigate a sparse data predictive control problem, run at a cloud service to control a system with unknown model, using $\ell_1$-regularization to limit the behavior complexity. The input-output data collected for the system is privacy-sensitive, hence, we design a privacy-preserving solution using homomorphically encrypted data. The main challenges are the non-smoothness of the $\ell_1$-norm, which is difficult to evaluate on encrypted data, as well as the iterative nature of the Lasso problem. We use a distributed ADMM formulation that enables us to exchange substantial local computation for little communication between multiple servers. We first give an encrypted multi-party protocol for solving the distributed Lasso problem, by approximating the non-smooth part with a Chebyshev polynomial, evaluating it on encrypted data, and using a more cost effective distributed bootstrapping operation. For the example of data predictive control, we prefer a non-homogeneous splitting of the data for better convergence. We give an encrypted multi-party protocol for this non-homogeneous splitting of the Lasso problem to a non-homogeneous set of servers: one powerful server and a few less powerful devices, added for security reasons. Finally, we provide numerical results for our proposed solutions.
\end{abstract}

\IEEEpeerreviewmaketitle

\section{Introduction}
\label{sec:introduction}
Sparsity and compressed sensing have been widely used in signal processing, machine learning and control applications, especially in the big-data regime and noisy environments~\cite{candes2006stable,hastie2015statistical}. In high-dimensional problems, it is likely that only a subset of features affects the observations. Hence, pursuing sparse representations reduces the model complexity, prevents overfitting and helps with overall interpretation. Since finding the solution with the minimal number of non-zero coefficients~is NP-hard, $\ell_1$-regularization has been proposed as a convex method that promotes sparsity. Perhaps one of the most used algorithms for sparse recovery has been the celebrated Lasso algorithm (least absolute shrinkage and selection operator), which accounts for both sparsity and potentially noisy data, using $\ell_1$-regularization. 

For instance, Lasso has been used in signal reconstruction for medical imaging, wireless communication and tracking; seismology applications; portfolio optimization; text analysis~\cite{zhang2015survey,hastie2015statistical}.
Many of these applications are large-scale or involve data coming from multiple data sources.
With the recent widespread availability and development of cloud services, it seems an attractive and cost effective solution to outsource the computations to the cloud, when the data owner or querier lacks the computational resources and/or expertise to locally perform them. 
Given the privacy-sensitive nature of medical data, financial data, location data, energy measurements etc., on which such problems are computed, and how they can be used to profile users or mount attacks on critical infrastructure, the computations should not be performed in the clear at the cloud service.

\subsection{Contributions}\label{subsec:contributions}
In order to deal with the privacy issues, we draw on cryptographic approaches, specifically, on homomorphic encryption, which enables polynomial computations by the cloud over encrypted data of the client. 
However, encrypted Lasso brings new challenges: evaluating non-smooth functions on encrypted data, as well as continuing computations over multiple iterations and time steps, which generally requires refreshing the ciphertexts.

A conventional observation is that distributing a large optimization problem to multiple servers improves the execution time by parallelizing smaller subproblems. 
Apart from this, we note that \emph{distributing the computation allows a streamlined execution of encrypted iterations}. In particular, using multiple servers allows us to perform a refresh operation at a substantially reduced cost compared to performing it only at one server. This cheaper refresh operation enables us to continue the encrypted computations over multiple iterations, as well as to use a high degree polynomial to approximate the gradient of the $\ell_1$-norm. 
Specifically, we propose:
\begin{itemize}
    \item an efficient distributed encrypted solution to Lasso problems using ADMM, offering computational privacy of all the data, including intermediate results;
    \item an optimized implementation of the above protocol using an efficient Chebyshev series evaluation for polynomial approximations and reducing the number of ciphertext levels and operations.
\end{itemize}

We apply our cloud-based sparsity framework to the problem of data-based predictive control.
Our goal is to control an unknown system using only the privacy-sensitive input-output data that are potentially noisy. Data-driven control is a blooming research area and methods based on the behavioral framework have received significant renewed interest \cite{de2019persistency,Coulson2019data,Berberich2020data,coulson2020distributionally,VanWaarde2020data,berberich2020robust,dorfler2021bridging} since their original proposal~\cite{Willems2005note,markovsky2008data}. The idea of such methods is that the state representation can be replaced by a data-based representation which only uses the trajectories of the system, bypassing the need for system identification. In the case of noisy data, inspired by~\cite{Coulson2019data,dorfler2021bridging}, we reformulate the data-based predictive control as a lasso problem.

For this use-case, we propose:
\begin{itemize}
    \item a distributed encrypted solution for $\ell_1$-regularized data predictive control, using an optimized implementation.
\end{itemize}
For better convergence, we customize this solution to split the problem heterogeneously between a powerful server and a few less powerful machines.

\subsection{Comparison to related work}\label{subsec:related}

The usage of ADMM for private distributed optimization is not novel, see e.g.,~\cite{zhang2018admm,ye2020privacy}, given its convenient formulation and splitting of the objective function and variables. (For other distributed gradient based methods, see e.g.,~\cite{Lu2018privacy}.) However, our usage of distributed ADMM substantially differs from previous works in the following: i) we start with centralized rather than already distributed data, so we split the centralized problem in a way that fits our privacy and low-power requirements; ii) we assume heterogeneous servers and we split the computations differently depending on who performs them; iii) the data at each server is not in the clear, which complicates the computations; iv) the servers do not learn any of the data, including intermediate iterates and results; this requires more complex computations to privately perform nonlinear operations; v) the $\ell_1$-regularization term is non-smooth and has nonlinear gradient, leading to updates of the global primal variable that are incompatible with the mentioned ADMM works.

Compared to~\cite{Alexandru2020cloud}, we consider the approximation of a non-smooth nonlinear objective function and use multiple servers to streamline the complex iterative computation, rather than a two-party computation, and a more powerful homomorphic encryption scheme to partially replace the blinded communication necessary at every iteration in~\cite{Alexandru2020cloud}.

The authors in~\cite{Zheng2019helen} propose a distributed ADMM for a Lasso problem, using an threshold additively homomorphic encryption and SPDZ~\cite{damgaard2012multiparty} for computing the nonlinear operations. 
In contrast to their work, the data is not distributed in the clear to the computing servers, meaning we have less flexibility with respect to the local computations. However, we can choose to split the data in the most convenient way to have a distributed convergence speed similar to the centralized convergence, which~\cite{Zheng2019helen} does not discuss. Another difference is that the tools they use require them to communicate for every nonlinear computation, such as multiplications and comparison operations, which require a number of communication rounds dependent on the number of bits in the messages. In our case, the servers only send two messages per iteration and the method we employ also allows us to batch vectors and perform operations in parallel for all elements of a vector. 

In~\cite{froelicher2021scalable,sav2020poseidon}, the authors propose distributed/federated training and evaluation with multi-party fully homomorphic encryption for linear, logistic and neural network models, using stochastic gradient descent. While we inspired our solution from the multi-party fully homomorphic encryption tool, their setup is different than ours: the data is either distributed in cleartext locally at the servers or other data providers perform the preprocessing; and the computations are different, leading to different optimizations: e.g., \cite{froelicher2021scalable} uses a combination of distributed and centralized bootstrapping operations, \cite{sav2020poseidon} has only one recurring variable (the model) to bootstrap. 

While differential privacy for the solution of the optimization problem~\cite{Han17, zhang2018improving} is out of the scope of this paper, it is an avenue for future research in privately determining parameters of the optimization problem. 

Encrypted control, surveyed in~\cite{Darup2020encrypted}, has been recently gaining momentum, due to the strong privacy guarantees it offers even when the controller is located on an untrusted platform. 
In the case of controlling a system with known model and linear controller parameters, the line of work~\cite{Cheon2018need,kim2019dynamic} has shown how to perform the computations at subsequent time steps without the need of bootstrapping or ciphertext reset. In contrast, we deal with both unknown model matrices and nonlinearities, which prevent the application of their methods. 

Our previous work~\cite{Alexandru2020towards,Alexandru2020data} provides confidentiality for~a different formulation of a data predictive control problem. The Lasso formulation in our current work does~not~have~a closed-form solution as before, which complicates the computations on encrypted data. We also use a different architecture at the cloud, in order to completely remove the client involvement during the computation of the control input.

An interesting remark is related to~\cite{Kim16encrypting}, where the authors propose to use multiple controllers in parallel that perform asynchronous local bootstrapping to ensure that at least one has a control input ready at each time step. In our case, we prefer multiple servers to perform a distributed bootstrapping in order to reduce the time it takes to refresh the ciphertexts.

\section{Problem formulation}
\label{sec:formulation}
For a covariate matrix $\mb A\in \mbb R^{m\times n}$, a vector of outcomes $\mb b\in\mbb R^m$, the variable $\mb x\in\mbb R^n$ and a penalty parameter~$\lambda>0$, the Lasso problem in its Lagrangian (unconstrained) form is given by:
\begin{equation}\label{eq:Lasso}
    \min_{\mb x}~ \frac{1}{2}  ||\mb A\mb x- \mb b||_2^2 + \lambda || \mb x ||_1.
\end{equation}

For dependent covariates, there is no closed-form solution to~\eqref{eq:Lasso} and many iterative optimization algorithms have been proposed in the literature~\cite[Ch.~5]{hastie2015statistical}. 
For example, Lasso problems can be solved using proximal methods or augmented Lagrangian methods, such as the Alternating Direction Method of Multipliers (ADMM)~\cite{boyd2011distributed},~\cite[Ch.~5]{hastie2015statistical}. 
Splitting the objective function in the ADMM way, we get:
\begin{align}\label{eq:Lasso-split}
\begin{split}
\min_{\mb x,\mb z}~& \frac{1}{2}  ||\mb A\mb x- \mb b||_2^2 + \lambda || \mb z ||_1  \\
\text{s.t.}~&~ \mb x - \mb z = \mb 0.
\end{split}
\end{align}

Let $S_\alpha(\mb x) = (\mb x - \alpha \mb 1)_{+} - (-\mb x - \alpha \mb 1)_{+}$ denote the soft thresholding operator. The ADMM algorithm for~\eqref{eq:Lasso-split} is:
\begin{align*}
    \mb x^{k+1} &= \arg\,\min\limits_{\mb x} \left( \frac{1}{2} || \mb A \mb x - \mb b ||_2^2 + \frac{\rho}{2} ||\mb x - \mb z^k + \mb w^k ||_2^2 \right) \\
    &= \left(\mb A^\intercal \mb A + \rho \mb I\right)^{-1} \left( \mb A^\intercal \mb b + \rho (\mb z^k - \mb w^k) \right) \\
    \mb z^{k+1} &= \arg\,\min\limits_{\mb z} \left( \lambda || \mb z ||_1 + \frac{\rho}{2} ||\mb x^{k+1} - \mb z + \mb w^k ||_2^2 \right)\numberthis\label{eq:admm}\\
    &=S_{\lambda/\rho} (\mb x^{k+1} + \mb w^k) \\
    \mb w^{k+1} &= \mb w^k + \mb x^{k+1} - \mb z^{k+1}.
\end{align*}

While for general optimization problems, the ADMM might converge slowly, for the Lasso problem it is known to have a fast convergence of a few (tens of) iterations for a large range of parameter $\rho>0$~\cite{boyd2011distributed}. ADMM is a fast choice in the cases where a very high precision of the optimal result is not required, which is the case in noisy control problems, like the one we investigate in Section~\ref{sec:reformulation}.

\textbf{Goals and privacy requirements. }
A client outsources problem~\eqref{eq:Lasso} to a cloud service that has to compute the optimal solution based on the data from the client. 

We consider the cloud service to be \textit{semi-honest}, meaning it does not deviate from the client's specifications, but can process the data it receives to extract private information for its own profit. 
The cloud service can be a conglomerate~of~$K$ servers (see Figure~\ref{fig:setup}), possibly belonging to different~organizations, offering the guarantee that not all $K$ servers collude.

Under this adversarial model, we require \textit{client data confidentiality}, i.e., an adversary corrupting at most $K-1$ of the servers should not be able to infer anything about the client's inputs and outputs, which consist of the values of the matrix $\mb A$ and the vector $\mb b$, any intermediate values, and solution $\mb x^\ast$. The penalty $\lambda$ and parameter $\rho$ can be chosen by the cloud service or chosen by the client, but are public. 

\begin{figure}[htb]
	\centering
	\includegraphics[width=0.7\columnwidth]{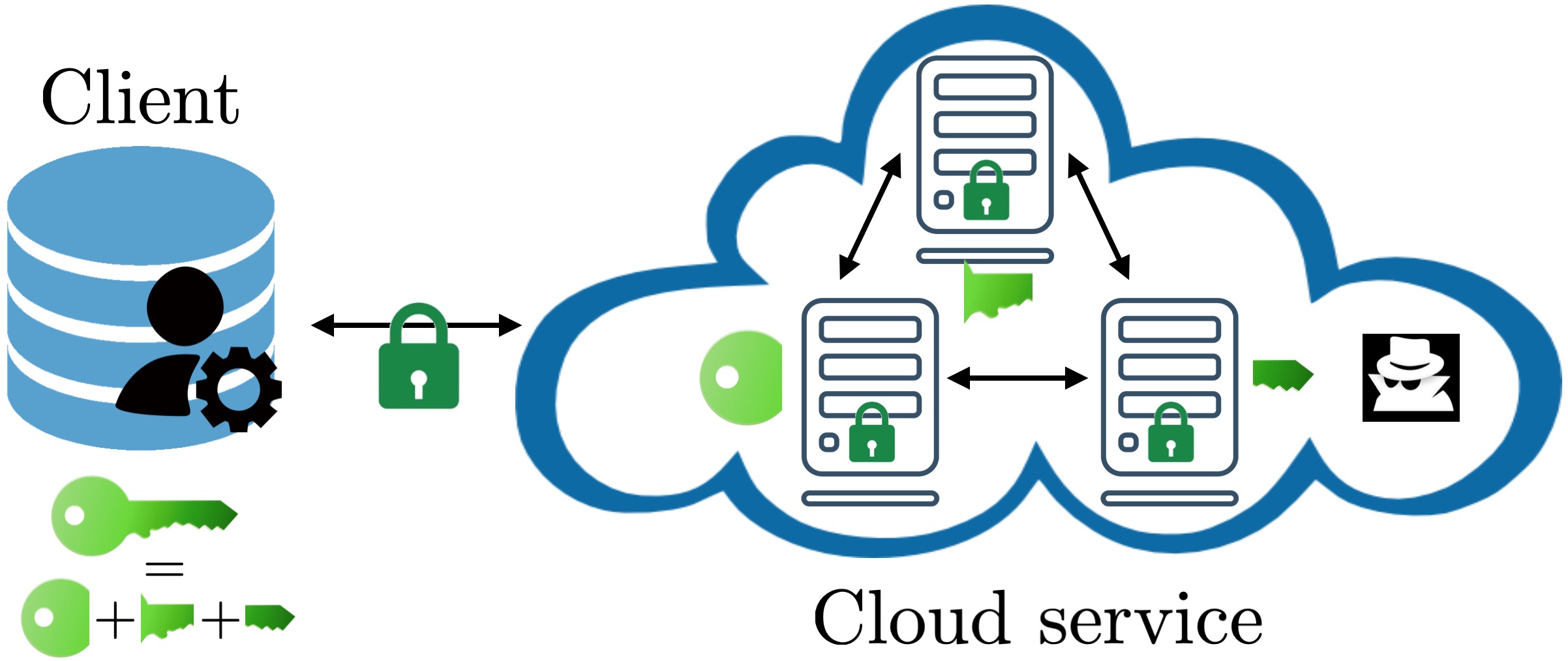}
	\caption{Schematic diagram of the problem, with a client outsourcing its encrypted data to a cloud service, that is authorized to compute on the data, but not to decrypt it. }
	\label{fig:setup}
\end{figure}

\section{Preliminaries}
\label{sec:preliminaries}

\subsection{Homomorphic Encryption (HE)}
\label{subsec:FHE}
A HE scheme is called \textit{partially homomorphic} if it supports the encrypted evaluation of either a linear polynomial or a monomial, \textit{somewhat or leveled homomorphic} if it supports the encrypted evaluation of a polynomial with a finite degree 
and \textit{fully homomorphic} if it supports the encrypted evaluation of arbitrary polynomials.

A HE scheme has a \textit{computational security parameter} $\kappa$ if all known attacks against the scheme, including breaking the encryption or distinguishing between the encryptions of different messages, take $2^\kappa$ bit operations. 
In practice, at least 128 bits of security are preferred~\cite{albrecht2019homomorphic}. 

The newer generations of HE schemes are based on lattices and rely on the Ring Learning with Errors~\cite{Lyubashevsky2010ideal,Albrecht15} hardness problem. Each operation on ciphertexts introduces some noise, with additions being cheap in terms of noise amount, but multiplications being expensive. If the noise amount exceeds a threshold, decryption yields an incorrect result, so parameters of HE schemes need to be designed accordingly. 
The \textit{level} in leveled HE schemes concerns the supported number of sequential multiplications. We say that a fresh ciphertext is on level $L$ and a multiplication consumes a level. A ciphertext on level 0 does not accept any more multiplications. 
Leveled HE schemes can be turned into fully HE schemes if a \textit{bootstrapping} operation is enabled, which can \textit{refresh} the ciphertext after the levels were consumed, such that further operations are allowed while still guaranteeing correct decryption.

If done locally at a server with no access to the private key, bootstrapping is a very expensive operation, consuming around 10 levels and introducing more noise (from encrypted approximation of non-polynomial functions); see~\cite{Cheon2018bootstrapping,Chen2019improved} for details. Apart from the computationally intensive bootstrapping procedure, all the prior and posterior operations are impacted, since ciphertexts are required to have an extra 10 levels, leading to very large scheme parameters and ciphertexts size, making centralized bootstrapping undesirable.

Instead of performing bootstrapping locally, a server can ask the client to refresh a ciphertext on level 0. However, this implies more computation, communication and availability from the client, which is often prohibitive. A preferable solution is to use two or more servers for the computation and the refreshing. 
However, corrupting only two servers can~be~attainable by an adversary. Increasing the number of servers decreases the probability of an adversary corrupting them all. \emph{Distributed bootstrapping trades substantial computation power to one round of communication and does not introduce as much noise as the centralized bootstrapping}, as described below.

In multi-party HE schemes, the private key is additively secret shared between a number of servers, meaning that no proper subset of the servers can decrypt. An important assumption is that an adversary cannot corrupt all servers at once, hence the private key is never recovered. In~\cite{mouchet2020multiparty}, a multi-party HE scheme is described, where servers can carry out the homomorphic computations locally and only need to interact for decryption and bootstrapping. In our scenario, the decryption will be performed at the client so we are only interested in distributed bootstrapping. Intuitively, distributed bootstrapping requires each server to use its local secret share of the private key to perform a partial decryption, mask this result and send it to the other servers. Summing up all the partial decryptions results in a refreshed ciphertext with the desired number of levels that can be correctly decrypted to the original message. The masking needs to provide statistical privacy of the message, so we require the mask to be $>80$ bits larger than the size of messages and that no overflow occurs (the statistical security parameter is generally smaller than the computational security parameter). This means that distributed bootstrapping consumes around 3 levels; see more details in~\cite{mouchet2020multiparty, froelicher2021scalable}. 

In this paper, we work with the multi-party version~\cite{froelicher2021scalable}~of the leveled HE scheme CKKS~\cite{Cheon2017homomorphic}. 
Each plaintext/ciphertext is a polynomial in the ring of integers of a cyclotomic field. This enables the encoding of multiple scalars in a plaintext/ciphertext and performing single instruction multiple data ($\mr{SIMD}$) operations, which can bring major computation and storage improvements. 
Abstracting the details away, the $\mr{SIMD}$ operations that can be supported are addition, element-wise multiplication by a plaintext or ciphertext and data slot rotations in ciphertexts. We will use $+$ and $\odot$ for $\mr{SIMD}$ addition and multiplication and $\rho(\mb x,i)$ to denote the row vector $\mb x$ rotated to the left by $i$ positions ($i<0$ means rotation to the right). 

We will denote by $\mr{E_{v0}}(\mb x)$ the encryption of the vector~$\mb x$ followed by trailing zeros and by $\mr{E_{v\ast}}(\mb x)$ the encryption of the vector $\mb x$ followed by junk elements (elements whose value we do not care about).

\subsection{Polynomial approximation and Chebyshev series}
\label{subsec:poly_eval}

As described above, homomorphic encryption can evaluate polynomials on encrypted values. However, other operations such as trigonometric functions, divisions or comparisons are not supported. Therefore, we prefer to evaluate a polynomial approximation of the non-polynomial functions. We choose to work with the Chebyshev polynomial series rather than the more common Taylor power series due to better precision and smaller approximation error. Specifically, the Chebyshev series polynomial interpolation is a near-minimax approximation of a continuous function on the interval $[-1,1]$~\cite{mason2002chebyshev}.

However, polynomial approximation is not a panacea: for non-smooth functions, it gives a reasonable error only on relatively small intervals and using high degree polynomials. We choose to use this method, rather than other encrypted computation tools that can exactly evaluate non-smooth functions at the cost of more communication, knowing that we are dealing with noisy systems, where the small approximation errors are absorbed by noise.

\section{Encrypted distributed Lasso}
\label{sec:solution}
The setting we consider is the following: the client encrypts its data $\mb A, \mb b$ and secret shares its private key to a number of servers. The servers are responsible to compute and return the solution of problem~\eqref{eq:Lasso-split} to the the client.

\subsection{Challenge: Evaluating non-polynomial functions}
In the steps~\eqref{eq:admm} of the ADMM algorithm for problem~\eqref{eq:Lasso-split}, the soft thresholding function is non-polynomial, yet we need to evaluate it on encrypted data when computing $\mb z^{k+1}$. We deal with this challenge by approximating the soft~thresholding function using a polynomial on a fixed interval via the Chebyshev series (we hardcode the coefficients for this function). If the interval is not $[-1,1]$, we first apply a linear transformation to bring the inputs to this interval.

In the context of encrypted evaluation, a high polynomial degree increases the number of levels necessary for the computations, as well as the number of homomorphic multiplications between ciphertexts, which are expensive operations (compared to plaintext-ciphertext multiplications or additions). While we cannot consume fewer than $\left\lceil \log n \right\rceil$ levels to evaluate a polynomial of degree $n$, we can reduce the $O(n)$ homomorphic multiplications from the naive evaluation. Specifically, we implement the Paterson-Stockmayer algorithm~\cite{paterson1973number}, which reduces the number of homomorphic multiplications to $\left\lceil \sqrt{2n} + \log n \right\rceil + \mathcal O(1)$ by recursively evaluating polynomials of smaller degree. We modify the Paterson-Stockmayer algorithm that works with power series to work with Chebyshev series. The benefit of this algorithm compared to the naive evaluation is visible after degree 5 and grows with the degree. As an example, to evaluate a non-monic polynomial of degree 25, we require 5 levels and 11 homomorphic multiplications between ciphertexts. 

\begin{remark}
The ``stability" of the ADMM iterations allows the value $\mb x^{k+1} + \mb w^k$ to stay within a fixed interval, given in Lemma~\ref{lemma:interval}. In practice, we choose this interval from prior simulation.
\end{remark}

\begin{lemma}\label{lemma:interval}
Define $\mb M:= \mb A^\intercal \mb A + \rho \mb I$, $\mb n: = \mb M^{-1}\mb A^\intercal \mb b$ and $c := \sqrt{n}\lambda/\rho ||2\rho \mb M^{-1} - \mb I||_2 + ||\mb n||_2$. Since $\sigma:=||\rho \mb M^{-1}||_2$ is in $(0,1]$, we have the following bounds for the quantity $||\mb x^{k+1} + \mb w^k||_\infty$ in~\eqref{eq:admm}, for all $k=1,\ldots,K_{\mr{iter}}$:
\begin{equation}\label{eq:bound}
\begin{aligned}
  ||\mb x^{k+1} + \mb w^k||_\infty &\leq \sigma^k  ||\mb n||_2 + \frac{1-\sigma^k}{1-\sigma}c, &&\text{if } \sigma<1\\
  ||\mb x^{k+1} + \mb w^k||_\infty &\leq ||\mb n||_2 + kc , &&\text{if } \sigma=1.
\end{aligned}
\end{equation}
\end{lemma}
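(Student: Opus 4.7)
The plan is to derive a one-step affine recursion for the quantity $\mathbf{y}^k := \mathbf{x}^{k+1} + \mathbf{w}^k$ and then iterate it in the $\ell_2$ norm, converting to $\ell_\infty$ at the end via the trivial inequality $\|\cdot\|_\infty \leq \|\cdot\|_2$. The whole argument hinges on a single shrinkage-type bound on the dual variable $\mathbf{w}^{k+1}$ that falls out of the soft-thresholding update.

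First I would rewrite the $\mathbf{w}$-update as $\mathbf{w}^{k+1} = (\mathbf{x}^{k+1}+\mathbf{w}^k)-S_{\lambda/\rho}(\mathbf{x}^{k+1}+\mathbf{w}^k)$, using the closed form $\mathbf{z}^{k+1}=S_{\lambda/\rho}(\mathbf{x}^{k+1}+\mathbf{w}^k)$ from~\eqref{eq:admm}. Since $|y-S_\alpha(y)|\leq \alpha$ componentwise by the definition of the soft-thresholding operator, this immediately gives $\|\mathbf{w}^{k+1}\|_\infty \leq \lambda/\rho$, and hence $\|\mathbf{w}^{k+1}\|_2 \leq \sqrt{n}\,\lambda/\rho$, for every $k\geq 0$.

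Next I would substitute $\mathbf{z}^{k+1} = \mathbf{x}^{k+1}+\mathbf{w}^k - \mathbf{w}^{k+1}$ (read off the $\mathbf{w}$-update) into the $\mathbf{x}^{k+2}$ formula from~\eqref{eq:admm}, obtaining
\begin{equation*}
\mathbf{y}^{k+1} = \mathbf{n} + \rho\mathbf{M}^{-1}\mathbf{y}^{k} + (\mathbf{I}-2\rho\mathbf{M}^{-1})\mathbf{w}^{k+1}.
\end{equation*}
Taking $\ell_2$ norms and using the bound on $\|\mathbf{w}^{k+1}\|_2$ yields the contraction
\begin{equation*}
\|\mathbf{y}^{k+1}\|_2 \leq \sigma\,\|\mathbf{y}^{k}\|_2 + c,
\end{equation*}
where $\sigma=\|\rho\mathbf{M}^{-1}\|_2$ and $c$ is exactly the constant defined in the lemma. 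That $\sigma\in(0,1]$ is clear: $\mathbf{M}\succeq \rho\mathbf{I}$ because $\mathbf{A}^\intercal\mathbf{A}\succeq 0$, and $\mathbf{M}$ is positive definite, so $\rho\mathbf{M}^{-1}$ has spectrum in $(0,1]$.

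Finally I would iterate this scalar recursion from $\mathbf{y}^0$. With the standard warm start $\mathbf{w}^0=\mathbf{z}^0=\mathbf{0}$, one has $\mathbf{x}^1=\mathbf{n}$ and thus $\|\mathbf{y}^0\|_2=\|\mathbf{n}\|_2$. Unrolling $k$ steps gives $\|\mathbf{y}^k\|_2 \leq \sigma^k\|\mathbf{n}\|_2 + c\sum_{j=0}^{k-1}\sigma^j$, which sums to the geometric-series bound of~\eqref{eq:bound} when $\sigma<1$ and to the arithmetic bound $\|\mathbf{n}\|_2+kc$ when $\sigma=1$. Dominating $\|\cdot\|_\infty$ by $\|\cdot\|_2$ finishes the proof. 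The only mildly nontrivial step is the algebraic identity producing the affine recursion for $\mathbf{y}^k$; everything else is triangle-inequality bookkeeping and a geometric sum.
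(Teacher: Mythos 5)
Your proposal is correct and follows essentially the same route as the paper: both derive the affine recursion $\|\mathbf{y}^{k+1}\|_2 \leq \sigma\|\mathbf{y}^k\|_2 + c$ by substituting the updates into one another and bounding the soft-thresholding residual by $\lambda/\rho$ componentwise (your $(\mathbf{I}-2\rho\mathbf{M}^{-1})\mathbf{w}^{k+1}$ term is exactly the paper's $(2\rho\mathbf{M}^{-1}-\mathbf{I})(\mathbf{z}^k-\mathbf{y}^k)$ term, since $\mathbf{z}^k - \mathbf{y}^k = -\mathbf{w}^k$ up to the index shift), then unroll the geometric progression from $\mathbf{w}^0=\mathbf{z}^0=\mathbf{0}$ and pass from $\|\cdot\|_2$ to $\|\cdot\|_\infty$. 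The justification of $\sigma\in(0,1]$ is a small bonus the paper takes as given.
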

\vspace{7pt}

\begin{proof}
Let $\mb y^k:= \mb x^{k} + \mb w^{k-1}$ Manipulating~\eqref{eq:admm}, we get:
\begin{align*}
    \mb y^{k+1} &= (\mb I - \rho \mb M^{-1})\mb y^{k} + (2\rho \mb M^{-1} - \mb I)\mb z^k + \mb n\\
    &= \rho \mb M^{-1} \mb y^k + (2\rho \mb M^{-1} - \mb I)(\mb z^k - \mb y^k) + \mb n.
\end{align*}
The expression of the thresholding operator gives the following bound: $-\lambda/\rho \leq \mb z_i^k - \mb y_i^k \leq \lambda/\rho$, for $i=1,\ldots,n$. Then, using the triangle inequality and submultiplicative property:
{\medmuskip=1mu
\begin{align*}
    &||\mb y^{k+1}||_2 \leq ||\rho\mb M^{-1}\mb y^{k}||_2 + ||(2\rho \mb M^{-1} - \mb I) (\mb z^k - \mb y^k)||_2 + ||\mb n||_2\\
    &\leq ||\rho \mb M^{-1}||_2 ||\mb y^k||_2 + \sqrt{n}\lambda/\rho ||2\rho \mb M^{-1} - \mb I||_2 + ||\mb n||_2.
\end{align*}
}\ignorespaces
We select $\mb z^0 = \mb w^0 = \mb 0$, compress the geometric~progression and use $||\mb y^{k+1}||_\infty \leq ||\mb y^{k+1}||_2$, getting the bounds in~\eqref{eq:bound}.
\end{proof}

\subsection{Challenge: Evaluating iterations}
Depending on the precision we choose, the polynomial approximation can have a high degree, implying the need of bootstrapping in order to continue operations in the subsequent iterations. We resolve this challenge by making use of multiple servers in order to realize a cheaper bootstrapping compared to a centralized bootstrapping and a less burdensome solution than requesting action from the client. 

To this end, we turn to the distributed version of ADMM \cite{boyd2011distributed}, such that we use the servers both to ease the computation of the optimal solution and to ensure privacy through encrypted computations. We split the matrix $\mb A$ and vector $\mb b$ into $K$ parts, each to be held by a server, and rewrite~\eqref{eq:Lasso-split} as: 
\begin{align}\label{eq:Lasso-split-dist}
\begin{split}
\min_{\mb x_1,\ldots, \mb x_K, \mb z}~& \frac{1}{2}  \sum_{i=1}^K||\mb A_i\mb x_i-\mb b_i||_2^2 + \lambda || \mb z ||_1  \\
\text{s.t.}~&~ \mb x_i - \mb z = \mb 0, \quad i=1,2,\ldots,K.
\end{split}
\end{align}
The ADMM algorithm for problem~\eqref{eq:Lasso-split-dist} is, for $i = 1,\ldots,K$:
\begin{align}\label{eq:ddc-admm-dist}
\begin{split}
    \mb x_i^{k+1} &= \left(\mb A_i^\intercal \mb A_i + \rho \mb I\right)^{-1} \left( \mb A_i^\intercal \mb b_i + \rho (\mb z^k - \mb w_i^k) \right) \\
    \mb z^{k+1} &= \frac{1}{K}S_{\lambda/\rho} \left(\sum_{i=1}^K\mb x_i^{k+1} + \sum_{i=1}^K\mb w_i^k\right) \\
    \mb w_i^{k+1} &= \mb w_i^k + \mb x_i^{k+1} - \mb z^{k+1}.
\end{split}
\end{align}

Each server is given ciphertexts corresponding to $\mb A_i, \mb b_i$. We assume that there is a preprocessing step where servers can precompute convenient ciphertexts that will be used often in the online iterations, such as $1/\rho\mb A_i^\intercal \mb b_i$ and $\rho\left(\mb A_i^\intercal \mb A_i + \rho \mb I\right)^{-1}$. As in the unencrypted case, the servers can use the matrix inversion lemma to compute an inversion of a smaller matrix, which saves in offline computation. Online, each server locally computes the encryptions of $\mb x_i$ and $\mb w_i$, then communicates to the other servers the local sum $\mb x_i^{k+1} + \mb w_i^k$, such that all servers are then able to compute $\mb z^{k+1}$. So far, the only communication necessary is the same as in the unencrypted ADMM.

\subsection{Challenge: Realizing the fewest bootstrapping operations}
Distributed bootstrapping requires all parties to start by holding the same ciphertext and all parties to obtain that refreshed ciphertext. Bootstrapping the ciphertext encrypting $\mb z^{k+1}$ seems attractive, because it is global and its evaluation involves the most sequential multiplications. However, this is not enough: $\mb w_i^{k+1}$ loses levels through $\mb x_i^{k+1}$, which is the result of a multiplication; so we would need to bootstrap also before computing $\mb z^{k+1}$, not just after.

Instead, we do the following trick. Each server already~has to compute and send a ciphertext encrypting~$\mb x_i^{k+1} + \mb w_i^k$ to the other servers in order to compute the global iterate $\mb z^{k+1}$. This means that each server can then construct a packed ciphertext $c^{k+1}$ encrypting $\big[(\mb x_1^{k+1} + \mb w_1^k)^\intercal (\mb x_2^{k+1} + \mb w_2^k)^\intercal \, \ldots \, (\mb x_K^{k+1} + \mb w_K^k)^\intercal \big]$ and distributedly bootstrap it. Afterwards, each server can extract the refreshed ciphertext containing its local value $\mb x_i^{k+1} + \mb w_i^k$, as well as a refreshed ciphertext containing $\sum_{i=1}^K \mb x_i^{k+1} + \mb w_i^k$ by repeatedly rotating and summing the refreshed ciphertext $c^{k+1}$ (this takes $O(K)$ operations). From this value, each server can locally compute its encrypted iterates $\mb w_i^{k+1}$ and $\mb x_i^{k+1}$, while doing only one bootstrapping operation per iteration rather than two. 

Apart from $\mb x_i^{k+1} + \mb w_i^k$, the servers send one more message to complete the bootstrapping operation, so there are two rounds of communication per iteration, one at the smallest admissible level (dictated by bootstrapping) and the other at the full number of levels required, computed below. 

Assume that offline quantities are refreshed. Define~$l_B$ to be the number of levels for a statistically secure distributed bootstrapping and $l_P$ to be the number of levels necessary for the evaluation of $S_{\lambda/\rho}(\cdot)$ at a desired precision: this is the degree of the approximation polynomial plus one, coming from the linear transformation to the interval $[-1,1]$ (we merge the multiplication by $1/K$ in the Chebyshev coefficients). Hence, the fresh ciphertexts need to have $L = l_B + l_P + 1$ levels, if we bootstrap once per iteration. Because $l_P$ is usually higher than 5, bootstrapping once every few iterations would lead to larger parameters and ciphertexts.

\subsection{Encrypted protocol}

Protocol~\ref{prot:dist_ADMM} describes the steps for privately solving the distributed Lasso problem.

We use an optimized diagonal method~\cite{Halevi2014algorithms} for encrypted matrix-vector multiplication. Consider a matrix $\mb S\in\mbR^{n\times n}$ and a vector $\mb p\in \mbR^n$. Denote the diagonals of $\mb S$ by $\mb d_i$, for $i = 0,\ldots,n-1$. Let $n_1 := \lceil \sqrt{n/2} \rceil$ and $n_2 := n/n_1$. The corresponding result $\mb q = \mb S \mb p$ is:
{\medmuskip=1mu
\begin{align}
&\mb q = \sum_{i=0}^{n-1} \mb d_i \odot \rho(\mb p, i) \label{eq:naive_mult}\\
&= \sum_{j=0}^{n_2-1} \rho \left (\sum_{k=0}^{n_1-1} \rho(\mb d_{j \cdot n_1 + k},-j \cdot n_1) \odot \rho(\mb p, k); j \cdot n_1 \right).\label{eq:bsgs_mult}
\end{align}
}\ignorespaces
With~\eqref{eq:bsgs_mult}, we need $n_1 + n_2 = O(\sqrt{n})$ homomorphic rotations, given $\rho(\mb d_{j \cdot n_1 + k},-j \cdot n_1)$, instead of $n$ if we use~\eqref{eq:naive_mult}. In both cases we require $n$ homomorphic multiplications.

For a rectangular matrix, we need extended diagonals but the method is the same. The header of the function that achieves this is $\mr{MultDiag}(\mb S, \mb p)$, and we pass the matrix $\mb S$ as separate ciphertexts encoding diagonals rotated accordingly and the vector $\mb p$ encoded in a ciphertext with trailing zeros. (In line 9 in Protocol~\ref{prot:dist_ADMM}, a masking is performed in order to satisfy the latter requirement.) We implement $\mr{MultDiag}$ such that it returns a ciphertext that encodes the result $\mb q$ with trailing zeros. 
This method can be parallelized. $\mr{MultDiag}$ is performed locally at the servers (lines 3 and 11). 

The header $\mr{ApproxSoftT}(\mb p)$ represents the implementation of the Chebyshev interpolation element-wise for $\mb p$, for a given set of coefficients that specify the degree of the approximation and an interval for which the coefficients are valid. Internally, the input is normalized to the interval $[-1,1]$ (this also masks the junk elements), such that the output is a ciphertext encoding the result with trailing zeros. $\mr{ApproxSoftT}$ is performed locally (line 8 in Protocol~\ref{prot:dist_ADMM}).

$\mr{DBoot}(\mb p)$ is a distributed protocol, where all servers start with the ciphertext of the vector $\mb p$ and all servers obtain a ciphertext that contains the refreshed vector $\mb p$ having a predetermined number of levels (line 6 in Protocol~\ref{prot:dist_ADMM}). It implies one round of communication between all servers, as described in Section~\ref{subsec:FHE}. 

There is also an offline protocol that computes the input~as listed in Protocol~\ref{prot:dist_ADMM}. We mention that the client distributes the rows of $\mb A$ and $\mb b$ to the servers and the shares of the private key. The servers compute $\mb m_i$ by multiplication and $\mb M_i$ by the matrix inversion lemma and a high degree polynomial approximation for the inversion function, and use rotation and masking in order to obtain the diagonal representation needed. The servers collaborate to bootstrap the ciphertexts, in order to start Protocol 1 on the desired level.

\begin{protocol}[Distributed encrypted protocol for~\eqref{eq:Lasso-split-dist} with \emph{equal servers} and \emph{equal data split}]
  \label{prot:dist_ADMM}
  \begin{algorithmic}[1]
\small
 \Require{Public parameters: public key $\mr{pk}$, number of servers $K$, number of maximum iterations $K_{\mr{iter}}$. 
 $C$: secret key $\mr{sk}$. 
 $S_1,\ldots,S_K$: encryption of $\mb M_i = \rho(\mb A_i^\intercal \mb A_i + \rho \mb I)^{-1}$, encryption of $\mb m_i = \frac{1}{\rho}(\mb A_i^\intercal \mb b_i)$, share of the secret key $\mr{sk}_i$, the Chebyshev coefficients for evaluating $S_{\lambda/\rho}(\cdot)$ on a given interval.}
 \Ensure{$C$: $\mb x^{\ast}$.}
 \State $S_{i = 1,\ldots,K}$: set initial values $\mr{E_{v0}}(\mb x^0_i)$, $\mr{E_{v0}}(\mb w^0_i)$, $\mr{E_{v0}}(\mb z^0)$ (the value of $\mb z^k$ is previously agreed upon); 
 \For {$k = 0,\ldots, K_{\mr{iter}}-1$}
 \State $S_{i = 1,\ldots,K}$: $\mr{E_{v0}}(\mb x_i^k) = \mr{MultDiag}(\mb M_i, \mb m_i + \mb z^k - \mb w_i^k)$; 
 \State $S_{i = 1,\ldots,K}$: compute and send to other servers the rotation of the sum $\mr{E_{v0}}(\mb v_i) := \rho(\mb x^{k+1}_i + \mb w^k_i, -(i-1)n)$;
 \State $S_{i = 1,\ldots,K}$: assemble $\mr{E_{v\ast}}(\mb v) := \mr{E_{v\ast}}([\mb v_1 \, \mb v_2 \, \ldots \mb v_K])$ by summing own ciphertext and all received shifted ciphertexts;
 \State $S_{i = 1,\ldots,K}$: perform own part in the distributed bootstrapping and get $\mr{E_{v\ast}}(\mb v^b):=\mr{DBoot}(\mr{E_{v\ast}}(\mb v))$;
 \State $S_{i = 1,\ldots,K}$: extract its refreshed sum of local iterates $\mr{E_{v*}}(\mb v^b_i) = \rho(\mr{E_{v\ast}}(\mb v^b), (i-1)n )$;
 \State $S_{i = 1,\ldots,K}$: rotate and sum $\mr{E_{v\ast}}(\mb v^b)$ to get $\mr{E_{v\ast}}(\sum_{i=1}^K \mb x^{k+1}_i+ \mb w^k_i)$, then compute $\mr{E_{v0}}(\mb z^k) = \mr{ApproxSoftT}(\frac{1}{K}\sum_{i=1}^K \mb x^{k+1}_i + \mb w^k_i, \lambda/\rho)$;
 \State $S_{i = 1,\ldots,K}$: $\mr{E_{v0}}(\mb w^{k+1}_i) = [\mb 1_{S}^\intercal \, \mb 0^\intercal ]^\intercal \odot \mr{E_{v*}}(\mb v^b_i) - \mr{E_{v0}}(\mb z^k)$; 
 \EndFor
 \State $S_1$: compute $\mr{E_{v0}}(\mb x_1^{K_{\mr{iter}}}) = \mr{MultDiag}(\mb M_1, \mb m_1 + \mb z^{K_{\mr{iter}}-1} - \mb w_1^{K_{\mr{iter}}-1})$ and send it to the client $C$; 
 \State $C$: decrypt and extract $\mb x^\ast$.
\end{algorithmic}
\end{protocol}

\begin{proposition}\label{prop:ADMM_distr}
Protocol~\ref{prot:dist_ADMM} achieves client data confidentiality with respect to semi-honest servers, assuming at least one of the servers is honest.
\end{proposition}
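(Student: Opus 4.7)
The plan is to give a standard simulation-based proof of semi-honest security. Let $T \subsetneq \{S_1,\ldots,S_K\}$ denote the set of corrupted servers (with $|T|\leq K-1$ by assumption) and let $H$ be the nonempty set of honest servers. I would exhibit a probabilistic polynomial-time simulator $\mathsf{Sim}$ that, given only the public parameters $\mr{pk}, K, K_{\mr{iter}}, \lambda, \rho$, the Chebyshev coefficients, and the internal state of $T$ (its key shares $\mr{sk}_i$ and its own encrypted inputs $\mr{E_{v0}}(\mb M_i), \mr{E_{v0}}(\mb m_i)$ for $S_i\in T$), produces a transcript that is computationally indistinguishable from $T$'s real joint view. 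Since the initial inputs $\mb A_i, \mb b_i$ reach the servers only in encrypted form, this suffices to establish client data confidentiality of $\mb A$, $\mb b$, all intermediate iterates, and $\mb x^\ast$.

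Next I would enumerate what $T$ observes beyond its internal state: for each iteration $k = 0,\ldots,K_{\mr{iter}}-1$, the ciphertexts $\mr{E_{v0}}(\mb v_j)$ broadcast in line 4 by every honest $S_j \in H$, and the messages exchanged during the call to $\mr{DBoot}$ in line 6. The final ciphertext sent in line 12 is addressed to $C$, not to any server. The simulator would populate each line-4 honest contribution with a fresh encryption of zero under $\mr{pk}$. Since $T$ holds at most $K-1$ shares of $\mr{sk}$ and the multi-party CKKS key-sharing is hiding below the full threshold, IND-CPA security of CKKS implies that this substitution is computationally indistinguishable from the real message. For line 6, $\mathsf{Sim}$ invokes the simulator for the distributed bootstrapping protocol of~\cite{mouchet2020multiparty}, which produces transcripts statistically indistinguishable from real ones thanks to the uniform masks contributed by honest servers that are at least $80$ bits wider than the plaintext. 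A standard hybrid argument across the $K_{\mr{iter}}$ iterations then composes these substitutions.

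The main obstacle is that the partial-decryption shares exchanged in $\mr{DBoot}$ are not ordinary ciphertexts, so IND-CPA alone does not suffice; here I would invoke the dedicated analysis of~\cite{mouchet2020multiparty, froelicher2021scalable}, which proves exactly the required statistical hiding. The at-least-one-honest assumption is essential on two fronts: it prevents $T$ from reconstructing $\mr{sk}$ (so no ciphertext in $T$'s view is ever decryptable) and it guarantees a live uniform mask in every bootstrapping round. Finally, the offline preprocessing that yields $\mr{E_{v0}}(\mb M_i)$ and $\mr{E_{v0}}(\mb m_i)$ manipulates only ciphertexts under $\mr{pk}$ and invokes the same distributed bootstrapping primitive, so its confidentiality folds into the same hybrid and does not need a separate argument.
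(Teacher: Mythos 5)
Your proposal is correct and rests on exactly the same two pillars as the paper's own proof: the semantic security of CKKS under the threshold key sharing (so ciphertexts, including the broadcast local sums, reveal nothing to a coalition holding at most $K-1$ key shares) and the proven statistical security of the distributed bootstrapping of~\cite{mouchet2020multiparty,froelicher2021scalable} given at least one honest server and sufficiently wide masks. The paper states this argument informally, whereas you cast it as a standard simulation-plus-hybrid proof; that is a more rigorous rendering of the same reasoning rather than a different route.
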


\begin{proof}
We use two theoretical results in the proof. First, we need the underlying homomorphic encryption scheme, CKKS, to be semantically secure, in order to ensure that an adversary that does not have access to the private key of the scheme cannot decrypt or even distinguish ciphertexts encrypting different values. This has been proven in~\cite{Cheon2017homomorphic} and assumes that the Decisional Ring Learning with Errors problem is computationally hard~\cite{Lyubashevsky2010ideal}. We select the scheme parameters to ensure that this problem is hard in practice, specifically that it achieves a security level of 128 bits, according to the Learning with Errors estimator of~\cite{Albrecht15}.
Second, we need that the interactive part of the multi-party CKKS scheme, in our case, the distributed bootstrapping protocol, preserves the indistinguishability of the ciphertexts and does not reveal the private key. This has been proven in~\cite{froelicher2021scalable} and assumes that at least one servers is honest and that the masks used in $\mr{DBoot}$ are statistically hiding. Our adversarial model indeed considers that at most $K-1$ servers can be corrupted and we choose the masks to be 80 bits larger than the messages, while ensuring enough levels such that the result does not overflow.

In Protocol~\ref{prot:dist_ADMM}, the data of the client is sent encrypted to the servers. Since the servers cannot all collude in order~to reveal the private key, and no information about the messages underlying is leaked by viewing or computing on the respective ciphertexts, Protocol~\ref{prot:dist_ADMM} achieves client data confidentiality.
\end{proof}

\section{Case study: data predictive control}
\label{sec:reformulation}

Consider that the client wants to control a linear system with unknown model parameters. The goal is to compute a reference-tracking LQR control, based only on precollected input-output data. 
We can reformulate this control problem inspired by the behavioral framework~\cite{Willems2005note,de2019persistency,Coulson2019data,Berberich2020data}.

We need the concept of a \textit{block-Hankel matrix}. For the input signal $\mb u = \left[ \begin{matrix}  \mb u_0^\intercal & \mb u_1^\intercal &\ldots& \mb u_{T-1}^\intercal  \end{matrix} \right]^\intercal \in\mathbb R^{mT}$ and a positive integer $L$, this is given by the following:
\begin{equation*}
	\mb H_L(\mb u) := \left[\begin{matrix}	\mb u_0		& \mb u_1 	& \ldots & 	\mb u_{T-L}	\\
						   			\mb u_1		& \mb u_2 	& \ldots & \mb	u_{T-L+1}	\\ 
						  			\vdots		& 			&\ddots &	\vdots	\\
						   			\mb u_{L-1}	& \mb u_{L}	& \ldots & \mb u_{T-1}\end{matrix} \right].
\end{equation*} 
By definition, the signal $\mb u$ is persistently exciting of order $L$ if $\mb H_L(\mb u)\in\mathbb R^{mL \times (T-L+1)}$ is full row rank.

For input and output data $\mb u^d \in \mathbb R^{mT}$ and $\mb y^d \in \mathbb R^{pT}$, we construct block-Hankel matrices for $M$ samples for the past data and $N$ samples for the future data, where $S:=T-M-N+1$ and $\mb U_p \in\mathbb R^{mM \times S }, \mb U_f \in\mathbb R^{mN \times S }, \mb Y_p \in\mathbb R^{pM \times S }, \mb Y_f \in\mathbb R^{pN \times S }$:
\begin{equation}\label{eq:past+future}
\mb H_{M+N}(\mb u^d) = :\left[\begin{matrix}\mb U_p \\ \mb U_f \end{matrix} \right], \quad \mb H_{M+N}(\mb y^d) = :\left[\begin{matrix}\mb Y_p  \\ \mb Y_f \end{matrix} \right].\\
\end{equation}

Assume we have \textit{data richness}, i.e., the precollected input is persistently exciting~\cite{Willems2005note}. This requires that the precollected input signal has length at least $(m+1)(M+N+n)-1$.

Fix a time $t$ and let $\bar{\mb u}_t=\mb{u}_{t-M:t-1}$ be the batch vector of the last $M$ inputs. The batch vector $\bar{\mb y}_t$ of the last $M$ outputs is defined similarly. If $M\ge n$, the standard LQR problem can be reformulated as
the data predictive control problem~\cite{Coulson2019data} in~\eqref{eq:optimization0}, where the state representation is replaced with the precollected data. According to the behavioral framework, an input-output trajectory of a linear system is in the image of the block-Hankel matrices for the~precollected data, i.e., the constraint in~\eqref{eq:optimization0}, where $\mb g$ is a preimage of~the trajectory. 
$\mb Q$, $\mb R$ are LQR costs and $\mb r$ is the desired~reference. 
The first $m$ elements of $\mb u^{\ast,t}$ are input into the system in a receding horizon fashion and $\mb y^{\ast,t}$ is the predicted output.
{\medmuskip=1mu
\begin{align}\label{eq:optimization0}
\begin{split}
\min_{\mb g,\mb u,\mb y}~&~\frac{1}{2} \sum_{k=t}^{N+t-1} \left( || \mb y_k - \mb r_{k}||_{\mb Q}^2 + || \mb u_k ||_{\mb R}^2 \right) \\
s.t.~&~\left[\mb U_p^\intercal \,\,\, \mb Y_p^\intercal \,\,\, \mb U_f^\intercal \,\,\, \mb Y_f^\intercal \right]^\intercal \cdot \mb g = \Big[\bar{\mb u}_t^\intercal \,\,\, \bar{\mb y}_t^\intercal \,\,\, \mb u^\intercal \,\,\, \mb y^\intercal \Big]^\intercal. 
\end{split}
\end{align}
}

In practice, there will be noise affecting the output measurement, as well as precision errors induced by encryption, which might prevent an exact solution to the equality constraints for the past data in~\eqref{eq:optimization0}. 
Hence, we prefer a relaxation of the equality constraints via an $\ell_2$-least-squares approach with penalty weights $\lambda_y$ and $\lambda_u$.
Then, we rewrite~\eqref{eq:optimization0} as a minimization problem depending only on $\mb g$ by enforcing $\mb u = \mb U_f \mb g$ and $\mb y = \mb Y_f \mb g$. We also batch the objective function, using the same notation $\mb Q,\mb R,\mb r$ for the batched costs and reference. 

Finally, to avoid overfitting due to noisy data, we penalize the magnitude of $\mb g$ through an $\ell_1$-regularization with penalty parameter $\lambda_g$ in~\eqref{eq:optimization_norm1}. The intuition behind this choice comes from the fact that in the noiseless data predictive control formulation, the block-Hankel matrix of the trajectory data has an inherent low-rank structure. Choosing an $\ell_1$-regularization acts like a convex relaxation of imposing a low-rank constraint--see~\cite[Thm. 4.6]{dorfler2021bridging} for more details.
\begin{align}\label{eq:optimization_norm1}
\begin{split}
\min_{\mb g}~& \frac{1}{2} \left( ||\mb Y_f\mb g-\mb r_t||_{\mb Q}^2 + ||\mb U_f\mb g||_{\mb R}^2\right) + \\
&~\lambda_y || \mb Y_p \mb g - \bar{\mb y}_t ||_2^2 + \lambda_u || \mb U_p \mb g - \bar{\mb u}_t ||_2^2 + \lambda_g || \mb g ||_1.
\end{split}
\end{align}

Notice that~\eqref{eq:optimization_norm1} is indeed a Lasso problem:
\begin{align}\label{eq:ddc-Lasso}
\begin{split}
\min_{\mb g}~& \frac{1}{2} ||\mb H\mb g-\mb J \mb f_t||_2^2 + \lambda_g || \mb g ||_1 ,
\end{split}
\end{align}
where we have 
$\mb J := \mr{blkdiag}\left(2\lambda_y \mb I, \, \mb Q, \, 2\lambda_u \mb I, \,\mb R\right)^{1/2}, 
\mb f_t := \Big[ \bar{\mb y}_t^\intercal \,\,\, \mb r_t^\intercal \,\,\, \bar{\mb u}_t^\intercal\,\,\, \mb 0^\intercal \Big]^\intercal, \mb H := \mb J \left[ \mb Y_p^\intercal \,\,\, \mb Y_f^\intercal \,\,\, \mb U_p^\intercal \,\,\, \mb U_f^\intercal \right]^\intercal$. In the case a hybrid regularization $\lambda_g \|\mb g\|_1 + \mu_g \|\mb g\|_2^2$ is preferred, we can use the same formulation~\eqref{eq:ddc-Lasso} and appropriately modify $\mb H$ and $\mb f_t$.

Our goal is to provide a solution that outsources~to~a~cloud service the computation of the optimal solution $\mb g^{\ast,t}$ of~\eqref{eq:ddc-Lasso} and of $\mb u^{\ast,t}= \mb U_f \mb g^{\ast,t}$, while ensuring client data confidentiality for all time steps, as described in Section~\ref{sec:formulation}.

\section{Encrypted data predictive control}
\label{sec:dd_solution}
Following the discussion in Section~\ref{sec:solution}, we write the problem~\eqref{eq:ddc-Lasso} as a distributed problem with split variables:
\begin{align}\label{eq:ddc-Lasso-split-dist}
\begin{split}
\min_{\mb g_1,\ldots, \mb g_K, \mb z}~& \frac{1}{2}  \sum_{i=1}^K||\mb H_i\mb g_i-(\mb J\mb f)_i||_2^2 + \lambda_g || \mb z ||_1  \\
\text{s.t.}~&~ \mb g_i - \mb z = \mb 0, \quad i=1,2,\ldots,K.
\end{split}
\end{align}

In the context of this data predictive problem, when we perform a homogenous split of the data (a split of equal size), the distributed solution converges very slowly~to the global optimal solution. The reason for that is that~the homogeneous sub-problems have a different optimal solution than the global solution. To gain intuition, consider splitting the component matrices $\mb U_p, \mb U_f, \mb Y_p, \mb Y_f$ of $\mb H$ equally between the $K$ servers. This means that each server solves a local optimization problem for the same system that generated the values, but being given fewer samples than necessary to characterize the behaviour of the system, i.e., losing persistency of excitation. The problem remains when allocating a random set of rows of the equal size to the servers. 

To avoid this issue, we prefer to unequally split the~problem. Specifically, we designate Server~1 to have most of the rows and the rest of the servers to hold fewer. Because the local solution of Server~1 is close to the central solution, the empirical convergence to the optimal solution is much faster. However, the more servers we add, the slower the convergence (if we do not weight contributions differently). A valid option is to have only one server do all the computation, i.e., central ADMM, and request help only for the distributed bootstrapping from the rest of the servers (recall that we require multiple servers both for an efficient distributed bootstrapping and for security of the private key). But since the rest of the servers would be idle while the central server performs the computation, we prefer to distribute some of the computations to them as well.

Let the matrix $\mb H_1$ denote the first $(m+p)M + pN$ rows of matrix $\mb H\in \mbb R^{(m+p)(N+M)\times S}$. We split the remaining rows of $\mb H$ into blocks of $mN/(K-1)$ rows, denoted $\mb H_i$ for $i = 2,\ldots, K$. We similarly split $\mb H^\intercal \mb J \in \mbb R^{S\times (m+p)(N+M)}$ into $\bar{\mb H}_1$ and $\bar{\mb H}_2\ldots,\bar{\mb H}_K$ and $\mb f_t\in \mbb R^m$ into $\mb f_{1,t}$ and $\mb f_{2,t},\ldots,\mb f_{K,t}$. 
We prefer to use more of less powerful devices in order to increase the security threshold (by splitting the secret key into more values) and reduce the cost of operating the cloud service. To this end, we shift some of the computations from the less powerful servers to the more powerful Server 1 and remove online communication between the client and the less powerful servers. Protocol~\ref{prot:DD_dist_ADMM} differs from Protocol~\ref{prot:dist_ADMM} in this different allocation of computation, described below. 

First, we split problem~\eqref{eq:ddc-Lasso-split-dist} such that $\mb f_{i,t} = \mb 0$, for $i=2,\ldots,K$, see~\eqref{eq:ddc-admm-dist_hetero}. Second, Servers $2,\ldots,K$ have an easier offline computation, since they have to invert substantially smaller matrices than Server 1, using the matrix inversion lemma. The bootstrapping step is done the same as in Protocol~\ref{prot:dist_ADMM}, after all parties broadcast their local sums.~However, we let only the more powerful Server 1 perform the summation $\sum_{i=1}^K \mb g_i^{k+1} + \mb w_i^k$ and the evaluation of the soft thresholding approximation, and then send the result $\mb z^{k+1}$ to the other less powerful servers (the ciphertext $\mb z^{k+1}$ will have only $l_B+1$ levels so communication is cheap). Then, all servers continue with the computation of $\mb w_i^{k+1}$ and finish the iteration. 
\begin{align}\label{eq:ddc-admm-dist_hetero}
\begin{split}
    \mb g_1^{k+1} &= \left(\mb H_1^\intercal \mb H_1 + \rho \mb I\right)^{-1} \left( \bar{\mb H}_1^\intercal \mb f_1 + \rho (\mb z^k - \mb w_1^k) \right) \\
    \mb g_i^{k+1} &= \rho\left(\mb H_i^\intercal \mb H_i + \rho \mb I\right)^{-1}  (\mb z^k - \mb w_i^k), ~i = 2,\ldots, K \\    
    \mb z^{k+1} &= \frac{1}{K}S_{\lambda_g/\rho} \left(\sum_{i=1}^K\mb g_i^{k+1} + \sum_{i=1}^K\mb w_i^k\right) \\
    \mb w_i^{k+1} &= \mb w_i^k + \mb g_i^{k+1} - \mb z^{k+1}, ~i = 1,\ldots, K.
\end{split}
\end{align}

Moreover, because of the way we split the time-varying vector $\mb f_t$, such that the elements corresponding to Servers $2,\ldots,K$ are 0, there is no need for them to update with the latest values of $\mb u_t$ and $\mb y_t$. This way, only Server~1 needs to have a connection with the client. The ciphertexts communicated to the client are on level 0 (the predicted input $\mb u^{\ast,t}$), while the ciphertexts communicated from the client ($\mb u_t$ and $\mb y_t$ for assembling $\mb f_{1,t}$) are on level $l_B+2$.

Nevertheless, if the servers have different capacity, the more powerful server will likely have to wait on the other servers for the bootstrapping synchronization (requiring completion of the computation for $\mb g_i^{k+1}$).
In the idle time, the more powerful Server 1 can perform multiple local updates, which heuristically helps with convergence in our problem.

\begin{proposition}\label{prop:DD_distr}
Protocol~\ref{prot:DD_dist_ADMM} achieves client data confidentiality with respect to semi-honest servers, assuming at least one of the servers is honest.
\end{proposition}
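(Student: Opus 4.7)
The plan is to mirror the structure of the proof of Proposition~\ref{prop:ADMM_distr}, since the cryptographic building blocks are identical (multi-party CKKS with distributed bootstrapping) and only the allocation of computation and communication between the servers differs. First I would recall the two underlying cryptographic guarantees: (i) semantic security of CKKS under the Decisional Ring Learning with Errors assumption~\cite{Cheon2017homomorphic,Lyubashevsky2010ideal}, with parameters chosen per~\cite{Albrecht15} to ensure $128$-bit security; and (ii) the indistinguishability preservation of the multi-party distributed bootstrapping $\mr{DBoot}$ from~\cite{froelicher2021scalable}, which holds whenever at least one server is honest and the additive masks are statistically hiding (the $80$-bit margin above the message size and the overflow-free level budget both carry over unchanged from Protocol~\ref{prot:dist_ADMM}).

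Next I would enumerate every message that flows in Protocol~\ref{prot:DD_dist_ADMM} and argue that each is either a ciphertext under the joint public key $\mr{pk}$ or a $\mr{DBoot}$ transcript, and hence reveals nothing to a coalition of at most $K-1$ semi-honest servers. Specifically: the client's online inputs $\bar{\mb u}_t,\bar{\mb y}_t$ go to Server~$1$ as ciphertexts (on level $l_B+2$); the local sums $\mb g_i^{k+1}+\mb w_i^k$ broadcast by each server are ciphertexts; the $\mr{DBoot}$ round is covered by (ii); the update $\mb z^{k+1}$ that Server~$1$ sends to the remaining servers is a ciphertext on level $l_B+1$; and the predicted input $\mb u^{\ast,t}$ returned to the client is a ciphertext on level~$0$. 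The extra local computations Server~$1$ performs during idle time, and the simpler offline inversions the less powerful servers carry out, all act on ciphertexts and therefore do not enlarge the adversary's view.

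The only point where Protocol~\ref{prot:DD_dist_ADMM} genuinely differs from Protocol~\ref{prot:dist_ADMM} from a confidentiality standpoint is the asymmetry of roles, so I would explicitly argue that the split is view-preserving. An adversary corrupting any subset $\mathcal C$ with $|\mathcal C|\le K-1$ sees at most $K-1$ shares $\{\mr{sk}_i\}_{i\in\mathcal C}$ of the secret key, which by the threshold structure of the multi-party CKKS key reveal nothing about $\mr{sk}$; combined with semantic security, the adversary cannot distinguish the ciphertexts encoding $\mb H_i,\bar{\mb H}_i,\mb f_{i,t},\mb g_i^k,\mb w_i^k,\mb z^k$ from encryptions of zero. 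This remains true whether Server~$1$ is in $\mathcal C$ or not, because the data it holds (including its inversion of the larger matrix and the evaluation of $\mr{ApproxSoftT}$) arrives and is stored only in encrypted form, and the soft-thresholding output $\mb z^{k+1}$ it distributes is itself a freshly refreshed ciphertext.

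The main obstacle, and the only part that needs more than a copy of the earlier argument, is making sure that the \emph{heterogeneous} communication pattern (only Server~$1$ talks to the client; only Server~$1$ broadcasts $\mb z^{k+1}$) does not silently reveal side information such as timing or level-dependent ciphertext shape. I would handle this by noting that all cross-server messages specified in Protocol~\ref{prot:DD_dist_ADMM} carry fixed, publicly known levels and dimensions determined by $l_B$ and $l_P$, and that the optional extra local iterates Server~$1$ runs in its idle time alter only its internal ciphertexts, not the public transcript. Together with (i) and (ii), this yields client data confidentiality against any semi-honest coalition of at most $K-1$ servers, completing the proof.
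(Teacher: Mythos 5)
Your proposal is correct and takes essentially the same route as the paper, which disposes of this proposition in one sentence by reducing it to the proof of Proposition~\ref{prop:ADMM_distr} on the grounds that the heterogeneous task allocation still operates entirely on ciphertexts. Your more detailed enumeration of the transcript and the remark on fixed ciphertext levels and dimensions is a faithful (and somewhat more careful) elaboration of that same argument rather than a different approach.
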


The proof follows from the proof of Proposition~\ref{prop:ADMM_distr}, regardless of having the servers perform different tasks, since all tasks involve computations only on encrypted data.

\begin{protocol}[Distributed encrypted protocol for~\eqref{eq:ddc-Lasso-split-dist} with \emph{unequal servers} and \emph{unequal data split} for one time step $t$]
  \label{prot:DD_dist_ADMM}
  \begin{algorithmic}[1]
\small
 \Require{Public parameters: public key $\mr{pk}$, parameters of the system and offline trajectory $m,p,N,M,S$, the number of servers $K$, number of maximum iterations $K_{\mr{iter}}$. 
$C$: $(\mb u_\tau, \mb y_\tau)_{\tau=0,\ldots,t}$. 
$S_1$: encryption of $\mb M_1 = \rho(\mb H_1^\intercal \mb H_1 + \rho \mb I)^{-1}$, encryption of $\mb F_1 = \frac{1}{\rho}(\mb M_1 \mb H_1^\intercal \mb J)$, encryption of $\mb U_f$, encryption of $\bar {\mb y}_t, \mb{r}_t, {\mb u}_t$, 
share of the secret key $\mr{sk}_1$, the Chebyshev coefficients for evaluating the soft threshold function for a given interval and bias $\lambda_g/\rho$. 
$S_2,\ldots,S_K$: encryption of $\mb M_i = \rho(\mb H_i^\intercal \mb H_i + \rho \mb I)^{-1}$, 
share of the secret key $\mr{sk}_i$, for $i=2,\ldots,K$.}
 \Ensure{$C$: $\mb u_{t+1}$}
 \State $C$: send to $S_1$ the ciphertexts $\mr{E_{v0}}(\mb u_t)$, $\mr{E_{v0}}(\mb y_t)$, $\mr{E_{v0}}(\mb r_t)$;
 \State $S_1$: assemble the ciphertext $\mr{E_{v0}}(\mb f_{1,t}) =\mr{E_{v0}}( \left[\bar{\mb y}_t^\intercal \, \mb r_t^\intercal \, \bar{\mb u}_t^\intercal\right]^\intercal)$;
 \State $S_{i = 1,\ldots,K}$: set initial values $\mr{E_{v0}}(\mb g^0_i)$, $\mr{E_{v0}}(\mb w^0_i)$, $\mr{E_{v0}}(\mb z^0)$ (the value of $\mb z^k$ is previously agreed upon); 
 \For {$k = 0,\ldots, K_{\mr{iter}}-1$}
 \State $S_1$: compute $\mr{E_{v0}}(\mb g_1^k) = \mr{MultDiag}(\mb F_1, \mb f_{1,t}) + \mr{MultDiag}(\mb M_1, \mb z^k - \mb w_1^k)$; 
 \State $S_{i = 2,\ldots,K}$: compute $\mr{E_{v\ast}}(\mb g_i^k) =  \mr{MultDiag}(\mb M_i, \mb z^k - \mb w_i^k)$;
 \State $S_{i = 1,\ldots,K}$: compute and send to the other servers the rotation of the sum $\mr{E_{v0}}(\mb v_i) := \rho(\mb g^{k+1}_i + \mb w^k_i, -(i-1)S)$;
 \State $S_{i = 1,\ldots,K}$: assemble $\mr{E_{v\ast}}(\mb v) := \mr{E_{v\ast}}([\mb v_1 \, \mb v_2 \, \ldots \mb v_K])$ by summing own ciphertext and all received shifted ciphertexts;
 \State $S_{i = 1,\ldots,K}$: perform own part in the distributed boostratpping and get $\mr{E_{v\ast}}(\mb v^b) := \mr{DBoot}(\mr{E_{v\ast}}(\mb v))$;
 \State $S_{i = 1,\ldots,K}$: extract its refreshed sum of local iterates $\mr{E_{v*}}(\mb v_i) = \rho(\mr{E_{v\ast}}(\mb v), (i-1)S )$;
 \State $S_1$: rotate and sum the refreshed $\mr{E_{v\ast}}(\mb v^b)$ to obtain $\mr{E_{v\ast}}(\sum_{i=1}^K \mb g^{k+1}_i + \mb w^k_i)$, then compute $\mr{E_{v0}}(\mb z^k) = \mr{EvalApproxSoftT}(\frac{1}{K}\sum_{i=1}^K \mb g^{k+1}_i + \mb w^k_i, \lambda_g/\rho)$;
 \State $S_1$: send to all the other servers $\mr{E_{v0}}(\mb z^k)$;
 \State $S_{i = 1,\ldots,K}$: compute $\mr{E_{v0}}(\mb w^{k+1}_i) = [\mb 1_{S}^\intercal \, \mb 0^\intercal ]^\intercal \odot \mr{E_{v*}}(\mb g_i) - \mr{E_{v0}}(\mb z^k)$;
 \EndFor
 \State $S_1$: compute $\mr{E_{v0}}(\mb u^\ast) = \mr{MultDiag}(\mb U_f, \mb g^{K}_1)$ and send the result to the client $C$; \Comment{or directly obtain only the first $m$ components by multiplying by $[\mb I_m \,\, \mb 0]\mb U_f$}
 \State $C$: decrypt $\mb u_{t+1}$, plug it in the system to measure $\mb y_{t+1}$.
\end{algorithmic}
\end{protocol}

\section{Numerical results}
We consider a data-driven temperature control of a 4x4 stable system representing a building with four rooms, with sampling time $300$ seconds, and $M = 4$, $N = 8$, $T = 84$. We add process noise and measurement noise, both zero mean Gaussian with covariance $0.01\mb I$. We choose the cost matrices and regularization terms $\mb Q = 300\mb I, \mb R = \mb I, \lambda_g = 300, \lambda_y = \lambda_u = 3000$.
The data was distributed among 3 servers: Server 1 holds 64 rows and Servers 2 and 3 hold 16 rows each. Convergence for the Lasso problem associated to one time step of the above problem occurred after 20 ADMM iterations, for $\rho = 1200$. Figure~\ref{fig:comparison} reflects the tracking performance of the data predictive control problem with these parameters.

\begin{figure}[tb]
	\centering
	\includegraphics[width=\columnwidth]{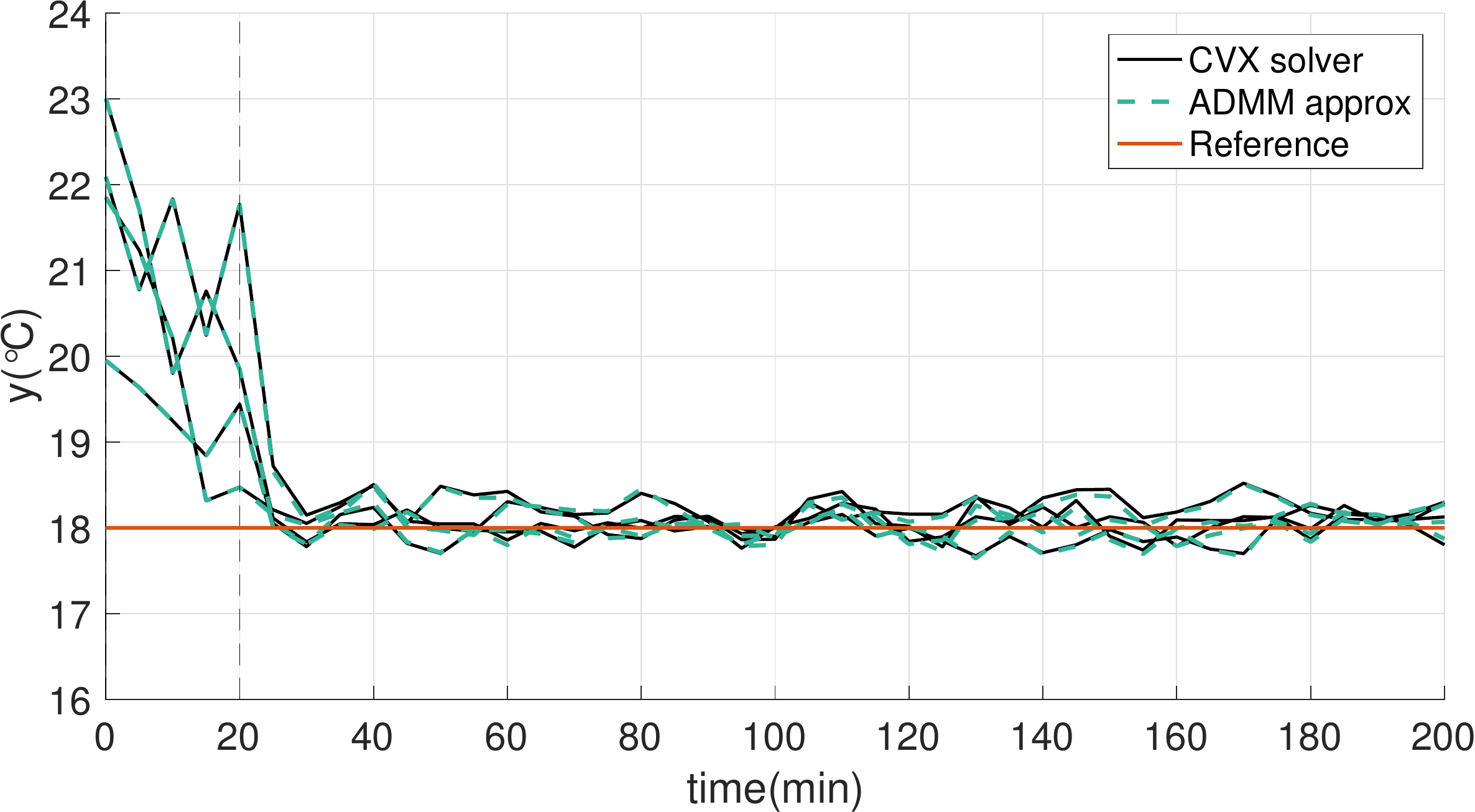}
	\caption{Comparison between the tracking performance of the data predictive controller solving~\eqref{eq:ddc-Lasso} exactly via the CVX solver, and solving~\eqref{eq:ddc-Lasso} via distributed ADMM with 3 servers and approximating the soft thresholding function with a degree-11 polynomial. The vertical dashed line marks the first $M$ time steps, corresponding to the initial offline data. The curves represent the temperature measurements in the four rooms of the system.}
	\label{fig:comparison}
\end{figure}

We evaluate Protocol~\ref{prot:DD_dist_ADMM} on Ubuntu 18.04 on a commodity laptop with 8 GB of RAM and Intel Core i7, implemented using the PALISADE library \cite{PALISADEv10}, using 8 threads. We set the parameters such that we get a security level of 128 bits, i.e., we use a ciphertext modulus of 436 bits and a ring dimension of $2^{14}$. We obtain 6 decimal places precision for the results. The average time for the first iteration is 2.75 seconds and for any iteration afterwards is 1.98 seconds. The time for Server 1 to assemble the vector $\mb f_{1,t}$ from the client and to compute the prediction is 0.61 seconds. The client needs 0.07 seconds to decrypt the control input and to encrypt the new measurement and the input. This gives a total computation time of 38.44 seconds per solving the optimization problem, not taking communication into account. The setup takes 4.5 seconds, and is performed once for all subsequent iterations.

If we artificially add a 150 ms delay of communication (serialization/deserialization and transport) and assume the machines send the messages sequentially to the other machines, then the total computation time increases by 6.45 seconds (450 ms delay per iteration, and 450 ms delay for communication between Server 1 and Client).  

Overall, the maximum amount of memory Server 1 needs to have is 1.22 GB, while Server 2 and 3 need 0.52 GB.

To simulate less powerful devices, we run Servers 2 and 3 on 2 threads instead of 8. The total time necessary for the 20 iterations increases to 49.96 seconds. The majority of the difference comes from the final operation of bootstrapping (which can be done asynchronously, i.e., it occurs after the communication so servers do not have to wait for each other): the total bootstrapping time increases from 0.91 seconds to 1.48 seconds. The rest of the difference comes from the fact that computing $\mb g_2^{k+1}$ and $\mb g_3^{k+1}$ takes 1.06 seconds compared to the 0.83 seconds that Server 1 needs to compute $\mb g_1^{k+1}$. 

\begin{figure}[tb]
	\centering
	\includegraphics[width=\columnwidth]{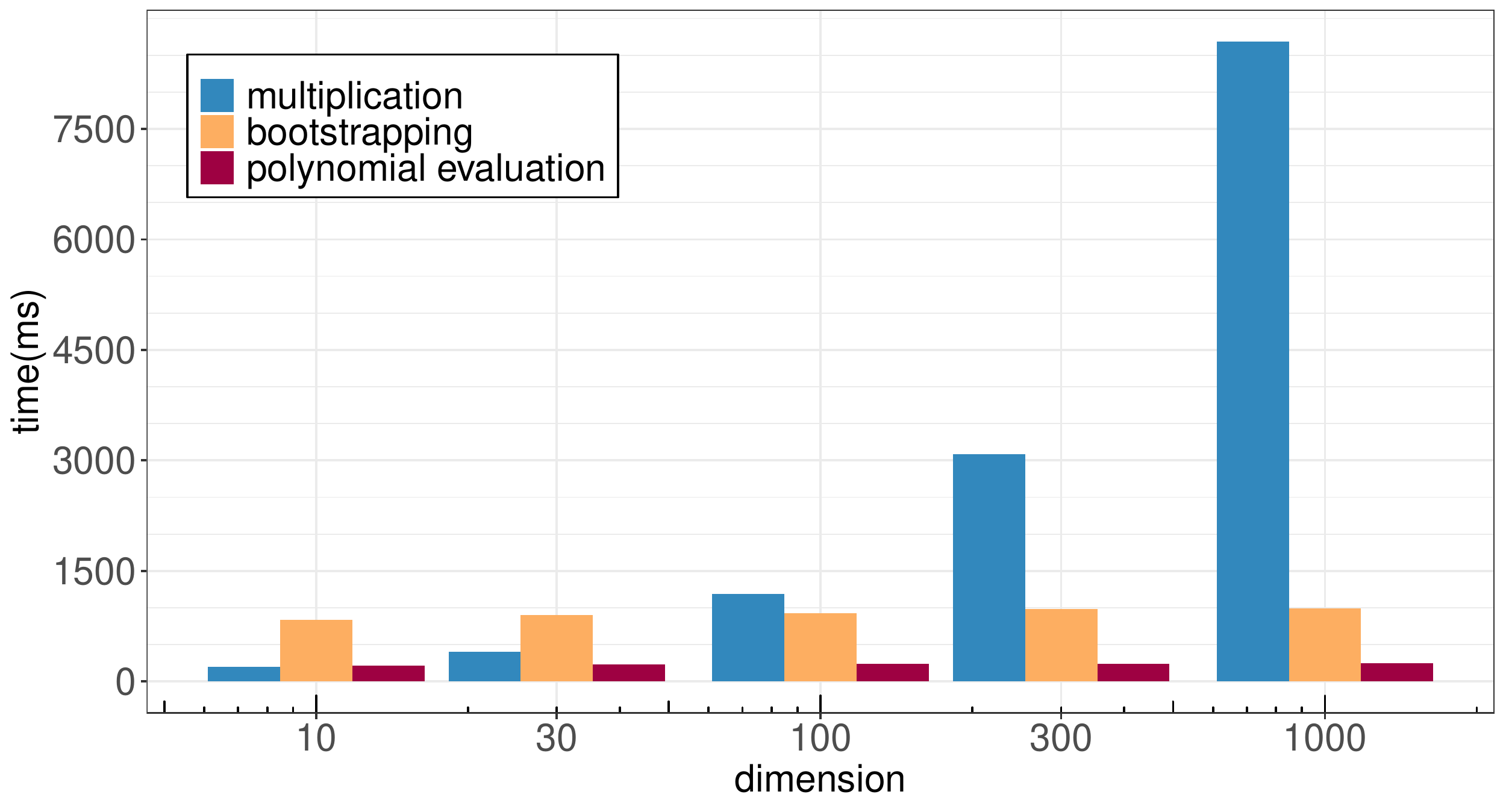}
	\caption{Timing for steps in one online iteration for one server for solving Lasso problems of various dimensions via encrypted distributed ADMM with three servers (Protocol~\ref{prot:dist_ADMM}). The legend shows the operation that takes the most time in the step. }
	\label{fig:ADMM}
\end{figure}

Because the servers only need to synchronize in order to bootstrap (Server 2 and 3 also wait for $\mb z^{k+1}$ from Server 1, but in our instance this does not create idle time since Server 1 is more powerful), Server 1 can either wait for the other servers to finish the computation or can perform two local updates of $\mb g_1$ . Heuristically, since the local solution of Server 1 is closer to the global optimal solution, performing more local iterations helps convergence (in our example, by needing 18 iterations instead of 20). Nevertheless, the computation of the control input is still ready in one sixth of the sampling time.

In Figure~\ref{fig:ADMM}, we show how the time for one ADMM iteration varies with the dimension of the problem, i.e., number of columns of matrix $\mb A$ in~\eqref{eq:Lasso-split}. The scheme parameters are the same as described above. The blue bar shows the time for lines 3 and~4 in Protocol~\ref{prot:dist_ADMM}, effectively consisting of the matrix-vector multiplication. The yellow bar shows the time for lines 5~and 6, representing the preparation for bootstrapping and the bootstrapping itself. Finally, the red bar represents lines 7--9 of Protocol~\ref{prot:dist_ADMM}, consisting mostly of the polynomial evaluation. We want to stress that the bootstrapping and polynomial evaluation are made independent from the dimension of the problem through packing, which represents a great advantage when increasing the dimension. On the other hand, for large dimensions, the encrypted matrix multiplication takes most of the computational and memory effort, and other methods that decrease storage and number of operations at the cost of more levels might be preferable.

\section{Future work}
\label{sec:discussion}
In this work, as well as in most of encrypted protocols for control and optimization, we assumed knowledge~of~some bounds on the variables, which can themselves leak information. An important avenue of research is to design encrypted and differentially private prior experiments~in~order to compute differentially private bounds on parameters such as costs, penalties and number of iterations. We can involve the client in these experiments, by having it control~a small device with one share of the secret key. In this way, the client has to first agree on the computation that is being effectuated to let the experiment continue and decrypt the result. 

We will also perform several optimizations to reduce the memory consumption at the server machines, to make it even more amenable to small devices.

\bibliographystyle{IEEEtran}
\bibliography{IEEEabrv,biblo}
\end{document}